\newtheorem{theorem}{Theorem}
\newtheorem{lemma}[theorem]{Lemma}
\newtheorem{corollary}[theorem]{Corollary}
\newtheorem{question}[theorem]{Question}
\newtheorem{remark}[theorem]{Remark}
\numberwithin{equation}{section}
\newcommand{\NN}{\mathbb{N}}
\newcommand{\ZZ}{\mathbb{Z}}
\newcommand{\TT}{\mathbb{T}}
\newcommand{\w}{\omega}
\newcommand{\RR}{\mathbb{R}}
\newcommand{\A}{\mathcal{A}}
\renewcommand{\phi}{\varphi}
\title[An Open Mapping Theorem]{An Open Mapping Theorem}
\author[S. S. Gabriyelyan]{Saak S. Gabriyelyan}
\address{Department of Mathematics, Ben-Gurion University of the Negev,
Beer-Sheva P.O. 653, Israel}
\email{saak@math.bgu.ac.il}
\author[S. A. Morris]{Sidney A. Morris}
\address{Faculty of Science and Technology, 
Federation University Australia, PO Box 663, Ballarat, Victoria, 3353,  Australia \& 
Department of Mathematics and Statistics,
 La~Trobe University, Melbourne, Victoria, 3086, Australia}
\email{morris.sidney@gmail.com}
\subjclass[2000]{Primary: 46A30; Secondary: 54H11, 54E52.}
\keywords{ pro-Lie group, locally compact abelian group, Open Mapping Theorem}
\begin{document}

\begin{abstract}
It is proved that any surjective morphism $f: \ZZ^\kappa \to K$ onto a locally compact group $K$ is open for every cardinal $\kappa$. This answers a question posed by Karl Heinrich Hofmann and the second author.
\end{abstract}

\maketitle

\section{Introduction}
In this paper we assume that all topological groups are  Hausdorff and abelian.
In the literature it is common to ask whether a surjective continuous homomorphism  
 $f: G\to K$ of a topological group $G$ onto a topological group $K$ is an open mapping. Positive results in this direction are  known as ``Open Mapping Theorems'' in the literature in Functional Analysis and Topological Algebra (see, for example, \cite[2.25]{fabian-10} for Banach spaces, \cite{KoTa} for Polish groups and \cite[9.60]{HM-proLie} and \cite{hm} for pro-Lie groups).  Most results of this type  impose a countability condition on $G$. Indeed, if $K$ is any countable non-discrete group or an infinite compact one and $G:=K_d$ is the group $K$ endowed with the discrete topology, then the identity map $i:G\to K$  is not open. Noting that for every uncountable cardinal $\kappa$ the totally disconneced abelian group $G=\ZZ^\kappa$ is neither a  Polish group nor a locally compact group, K.H.~Hofmann and the second author posed the following question, see Question 5 in \cite{HM}: {\em Is a surjective morphism $f: \ZZ^\kappa \to K$ onto a compact group open for every cardinal $\kappa$?}
We answer this question in the affirmative.

We will use the following notation and terminology.
For a topological group $K$, we denote by $K_0$  the connected component of the identity.
A topological group $K$ is called {\em almost connected} \cite{HM-proLie} if the quotient group $K/K_0$ is compact. A topological group $G$ is called a {\em pro-Lie group}  \cite{HM-proLie} if it is a closed subgroup of a product of finite-dimensional Lie groups. So the group $\ZZ^\kappa$ is a non-almost connected pro-Lie group. Every compact group is an almost connected pro-Lie group.

 We denote by $\mathcal{CDA}$ the class of all totally disconnected abelian groups $G$ such that for every open subgroup $H$ of $G$ the quotient group $G/H$ is countable. Note that the class $\mathcal{CDA}$  is closed  under taking subgroups, Hausdorff quotient groups and arbitrary  products (with the Tychonoff topology).

\section{Results}
The first lemma is an immediate consequence of Proposition 5.43 of \cite{HM-proLie}.

\begin{lemma} \label{l:H-M-connected}
Every non-totally disconnected abelian pro-Lie group $K$ has the circle group as a quotient group.
\end{lemma}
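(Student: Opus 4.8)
The plan is to deduce the statement from the cited structure theorem for abelian pro-Lie groups; the substance of the argument is to unwind what ``not totally disconnected'' buys us at the level of the defining projective limit. Recall that an abelian pro-Lie group $K$ is topologically isomorphic to a projective limit $K=\varprojlim_{N}K/N$, where $N$ runs over a filter basis of closed subgroups with each $K/N$ a (finite-dimensional) Lie group and $\bigcap_N N=\{e\}$; consequently the canonical map $K\to\prod_N K/N$ is a topological embedding onto a closed subgroup, and each projection $\pi_N\colon K\to K/N$ is an open continuous surjection.

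First I would argue that some Lie quotient $K/N$ is non-discrete. Indeed, a totally disconnected Lie group is discrete, being locally Euclidean of dimension $0$; so if every $K/N$ were discrete, then $\prod_N K/N$ would be zero-dimensional, hence so would its subgroup $K$, contradicting that $K$ is not totally disconnected. Fix such an $N$ and set $L:=K/N$, a non-discrete abelian Lie group, so $\dim L\ge 1$.

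Next I would exhibit the circle as a topological quotient of $L$, which together with the open surjection $\pi_N$ finishes the proof. The identity component $L_0$ is open in $L$ and, being a positive-dimensional connected abelian Lie group, is isomorphic to $\RR^a\times\TT^b$ with $a+b\ge1$. If $a\ge1$: then $\TT^b$ is a compact, hence closed, subgroup of $L$, the quotient $L/\TT^b$ is an abelian Lie group with identity component $\RR^a$ and therefore splits as $\RR^a$ times a discrete group, and the composite $L\twoheadrightarrow L/\TT^b\twoheadrightarrow\RR^a\twoheadrightarrow\RR\twoheadrightarrow\TT$ is an open continuous surjection. If $a=0$: then $L_0=\TT^b$ is a compact open subgroup of $L$; take a nontrivial coordinate projection $\chi\colon\TT^b\to\TT$, extend it to a homomorphism $\tilde\chi\colon L\to\TT$ using divisibility (injectivity) of $\TT$, and observe that $\tilde\chi$ is continuous and open because $\TT^b$ is open in $L$ while $\chi$ is continuous and open. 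In either case $\TT$ is a quotient group of $L$, and precomposing with $\pi_N$ exhibits $\TT$ as a quotient group of $K$.

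The one point needing care is the distinction between ``admits a continuous surjection onto $\TT$'' and ``has $\TT$ as a quotient group'': openness must be verified, which is why the maps above were arranged as composites of quotients by closed subgroups and of maps that are open for elementary reasons (coordinate projections, or extension of an open character across an open subgroup). All of this is subsumed by Proposition~5.43 of \cite{HM-proLie}, so in the write-up the lemma is immediate from that reference; the sketch above is merely its content specialised to the abelian setting.
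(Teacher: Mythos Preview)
Your proposal is correct and matches the paper's approach: both derive the lemma from Proposition~5.43 of \cite{HM-proLie}, and you explicitly say so in your final paragraph. The paper simply records the lemma as an immediate consequence of that proposition with no further argument, whereas you additionally supply a self-contained sketch (pass to a non-discrete Lie quotient via the projective-limit description, then use the structure $L_0\cong\RR^a\times\TT^b$ of a positive-dimensional abelian Lie group to produce an open surjection onto $\TT$); this extra detail is sound and is precisely the content of the cited proposition in the abelian case, so there is no substantive difference in strategy.
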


\begin{lemma} \label{l:H-M-disconnected}
Let $G\in \mathcal{CDA}$. If there is a surjective morphism $f: G \to K$ onto a  pro-Lie group $K$, then $K$ also belongs to the class $\mathcal{CDA}$.
\end{lemma}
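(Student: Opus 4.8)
The plan is to split the conclusion into its two requirements — (i) $K$ is totally disconnected, and (ii) $K/H$ is countable for every open subgroup $H$ of $K$ — dispose of (ii) at once, and reduce (i), via Lemma~\ref{l:H-M-connected}, to a statement purely about $\mathcal{CDA}$ and the circle group. Condition (ii) is immediate and uses neither the pro-Lie hypothesis nor total disconnectedness: if $H$ is an open subgroup of $K$, then $f^{-1}(H)$ is an open subgroup of $G$, so $G/f^{-1}(H)$ is countable because $G\in\mathcal{CDA}$, and since $f$ is surjective it induces a surjection $G/f^{-1}(H)\to K/H$; hence $K/H$ is countable. So the content of the lemma is (i).

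For (i) I would argue by contradiction. Suppose $K$ is not totally disconnected. Being abelian and pro-Lie, $K$ then has $\TT$ as a quotient group by Lemma~\ref{l:H-M-connected}; composing $f$ with an open surjective morphism $K\to\TT$ yields a continuous surjective homomorphism $G\to\TT$. Thus it suffices to prove the following assertion, which I would isolate as the main lemma: \emph{a group in $\mathcal{CDA}$ admits no continuous surjective homomorphism onto $\TT$}. (Equivalently one could replace $\TT$ by $\RR$: the pullback $\widetilde G=\{(g,t)\in G\times\RR:\ \chi(g)=t+\ZZ\}$ of a continuous $\chi\colon G\to\TT=\RR/\ZZ$ along the covering map $\RR\to\TT$ is totally disconnected — its identity component is contained in $\{0\}\times\ZZ$ — and is an extension of $G$ by $\ZZ$, so it again lies in $\mathcal{CDA}$, while it surjects continuously onto $\RR$ via the second coordinate.)

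For the main lemma, let $\chi\colon G\to\TT$ be any continuous homomorphism with $G\in\mathcal{CDA}$; it is enough to show $\chi(G)$ is countable (for then $\chi(G)\ne\TT$). Choose an open arc $U\ni 0$ in $\TT$ that contains no nontrivial subgroup and put $V:=\chi^{-1}(U)$, an open neighbourhood of $0$ in $G$. The decisive — and, I expect, hardest — step is to show that $V$ contains a subgroup $W$ of $G$ with $G/W$ countable: granting this, $\chi(W)$ is a subgroup of $\TT$ lying inside $U$, hence trivial, so $\chi$ factors through the countable group $G/W$ and $\chi(G)$ is countable. This is where both defining properties of $\mathcal{CDA}$ have to be used. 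If $G$ had a neighbourhood basis of open subgroups one would simply take a small open subgroup, which has countable index by definition of $\mathcal{CDA}$; but members of $\mathcal{CDA}$ need not have such a basis (for instance $\Q$, or $\Q^\kappa$, whose only open subgroup is the whole group), so one must exploit total disconnectedness directly to produce $W$ inside an arbitrary neighbourhood of $0$. The class of abelian topological groups in which every neighbourhood of $0$ contains a subgroup of countable index does contain all countable discrete groups and is stable under subgroups, Hausdorff quotients and arbitrary products — for a product, the basic neighbourhood $\prod_{i\in F}V_i\times\prod_{i\notin F}G_i$ with $F$ finite contains the subgroup $\prod_{i\in F}W_i\times\prod_{i\notin F}G_i$, whose quotient is the \emph{finite} product $\prod_{i\in F}G_i/W_i$ — and the task is to show that every member of $\mathcal{CDA}$ inherits this property; this is the structural heart of the argument and the step I would budget the most effort for.
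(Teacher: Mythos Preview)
Your approach mirrors the paper's exactly: the same split into (i) and (ii), the same reduction of (i) via Lemma~\ref{l:H-M-connected} to ruling out a continuous surjection $G\to\TT$, and the same endgame of trapping a subgroup inside the preimage of a small arc so that the image becomes countable. The difference is that the paper does not treat your ``decisive step'' as hard at all: it simply asserts that the open neighbourhood $\bar f^{-1}(U)$ \emph{contains an open subgroup} $H$ of $G$, and then $G/H$ is countable by the definition of $\mathcal{CDA}$, so $\bar f(H)=\{0\}$ and $\bar f(G)$ is countable.

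You are right to flag this. The paper's assertion presupposes that $G$ has a neighbourhood basis of open subgroups, which is not part of the definition of $\mathcal{CDA}$ and fails for your examples $\Q$ and $\Q^\kappa$ (whose only open subgroup is the whole group). For the paper's intended application $G=\ZZ^\kappa$ the step is immediate, since the standard basic neighbourhoods are themselves open subgroups; that is presumably why the authors pass over it. Your proposed repair --- drop ``open'' and seek merely a subgroup of countable index inside the neighbourhood --- does handle $\Q$ and $\Q^\kappa$ (take $\{0\}$, respectively $\{0\}^F\times\Q^{\kappa\setminus F}$). But your plan for proving it in general cannot close the gap as written: showing that the class of groups with this neighbourhood property contains countable discrete groups and is stable under sub/quotient/product only shows the class is large, not that it contains all of $\mathcal{CDA}$, because $\mathcal{CDA}$ is defined intrinsically and is not \emph{a priori} the closure of countable discrete groups under those operations. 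So the step you single out as ``the structural heart'' is indeed unresolved --- in your proposal and, for the lemma in its stated generality, in the paper's proof as well.
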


\begin{proof}
Suppose  $K$ is not totally disconnected.   Then, by Lemma \ref{l:H-M-connected}, there is a continuous homomorphism ${\bar f}$ from $G$ onto $\TT$. Let $U$ be any small neighborhood of the identity in the circle group $\TT$. Note that $U$ contains no non-trivial subgroups of $\TT$. As ${\bar f}^{-1}(U)$ is an open neighborhood of zero of $G$, ${\bar f}^{-1}(U)$ contains an open subgroup $H$ of $G$ such that $G/H$ is countable. So $f(H)=\{ 0\}$ and hence $f(G)=\TT$ is also countable, a contradiction to our supposition. Hence $G$ is totally disconnected.

It remains to show that for every open subgroup $H$ of $K$ the quotient group $K/H$ is countable. This  follows from the fact that $G/f^{-1}(H)$ is countable and $f$ is surjective.
\end{proof}

\begin{lemma} \label{l:H-M-tot-dis}
Let $K$ be an almost connected abelian  pro-Lie  group which is either totally disconnected or a  torsion group. Then $K$ is compact.
\end{lemma}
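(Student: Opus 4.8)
The plan is to reduce both cases to the single observation that $K$ is totally disconnected, after which almost connectedness finishes the job. The substantive geometric input has already been packaged as Lemma~\ref{l:H-M-connected}, so the proof is essentially a short deduction.

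First I would treat the case where $K$ is totally disconnected directly: then the connected component $K_0$ of the identity is trivial, so the canonical map $K\to K/K_0$ is an isomorphism, and $K/K_0$ is compact by the definition of ``almost connected''. Hence $K$ is compact. This also shows it suffices, in the remaining case, to prove that a torsion group $K$ satisfying the hypotheses is totally disconnected.

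So suppose $K$ is a torsion group and, for contradiction, that $K$ is not totally disconnected. Then $K$ is a non-totally disconnected abelian pro-Lie group, so Lemma~\ref{l:H-M-connected} yields a surjective morphism $K\to\TT$. On the one hand, $\TT$ contains elements of infinite order (any irrational rotation), so $\TT$ is not a torsion group; on the other hand, every Hausdorff quotient of a torsion group is again a torsion group, since the order of a coset divides the order of any representative. This contradiction forces $K$ to be totally disconnected, and then the argument of the first paragraph applies verbatim to give that $K$ is compact.

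I do not expect a real obstacle here: the only ingredients beyond Lemma~\ref{l:H-M-connected} are the elementary facts that $\TT$ is not a torsion group, that being a torsion group passes to Hausdorff quotients, and that in a totally disconnected topological group the identity component is trivial. If one preferred not to invoke Lemma~\ref{l:H-M-connected}, an alternative would be to argue via the structure theory of (almost) connected abelian pro-Lie groups—any nontrivial connected abelian pro-Lie group has a nontrivial connected abelian Lie group quotient $\RR^n\times\TT^m$, which is never torsion—but routing through Lemma~\ref{l:H-M-connected} is the shortest path given what is already available.
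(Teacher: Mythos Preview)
Your proof is correct but differs from the paper's in the torsion case. The paper handles that case directly: it cites Theorem~5.20 of \cite{HM-proLie} to conclude that the connected component $K_0$ (a torsion connected abelian pro-Lie group) is compact, and then invokes the three-space property of compactness together with the compactness of $K/K_0$. You instead reduce the torsion case to the totally disconnected case via Lemma~\ref{l:H-M-connected}: a non-totally-disconnected $K$ would surject onto $\TT$, contradicting torsion. Your route is more self-contained relative to what the paper has already set up (no new external citation, no three-space argument), while the paper's route avoids a proof by contradiction and records the slightly stronger intermediate fact that $K_0$ is compact rather than merely trivial.
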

\begin{proof}
If $K$ is totally disconnected, then $K=K/K_0$ is compact by definition.

Assume that $K$ is torsion. By Theorem 5.20 of \cite{HM-proLie}, a torsion abelian connected pro-Lie group is compact. So the connected component $K_0$ of $K$ is compact.  But the almost connected group $K$ by definition is then an extension of a  compact group by a compact group.  As compactness is a three space property for topological groups, $K$ is compact.
\end{proof}

For every $m>1$ and cardinal number $\kappa$, the group  $\ZZ^\kappa/m\ZZ^\kappa =\ZZ(m)^\kappa$ is compact. Being motivated by this fact, we denote by $\mathcal{CDA}_k$ the class of all groups $G\in\mathcal{CDA}$ for which $G/G_m$ is compact for every natural number $m>1$, where $G_m := \mathrm{cl}_G (mG)$, the closure in $G$ of $mG$. So $\ZZ^\kappa\in \mathcal{CDA}_k$. Note that the group $G/G_m$ has order $\leq m$, for every $m>1$.  We note also that the class $\mathcal{CDA}_k$ is closed  under taking  Hausdorff quotient groups and arbitrary products.

\begin{lemma} \label{l:H-M-bounded}
Let $G\in \mathcal{CDA}_k$. If $f: G \to K$  is a surjective continuous homomorphism onto an  almost connected torsion pro-Lie group $K$, then $f$ is an open mapping.
\end{lemma}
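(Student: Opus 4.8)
The plan is to reduce the assertion to the elementary fact that every continuous surjective homomorphism out of a compact group is open.

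First I would pin down the structure of $K$. Since $K$ is an almost connected abelian torsion pro-Lie group, Lemma~\ref{l:H-M-tot-dis} applies directly and shows that $K$ is compact. (One could equally first invoke Lemma~\ref{l:H-M-disconnected}, using $G\in\mathcal{CDA}_k\subseteq\mathcal{CDA}$, to conclude that $K\in\mathcal{CDA}$ and in particular is totally disconnected, and then again appeal to Lemma~\ref{l:H-M-tot-dis}.)

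Next I would show that $K$ has bounded exponent. As $K$ is torsion we have $K=\bigcup_{n\geq 1}K[n]$, where $K[n]:=\{x\in K: nx=0\}$ is a closed subgroup of $K$. Because $K$ is compact it is a Baire space, so some $K[n_0]$ has non-empty interior; a subgroup with non-empty interior is open, and an open subgroup of a compact group has finite index $r$, whence $rK\subseteq K[n_0]$ and so $(n_0 r)K=\{0\}$. Thus $K$ is bounded; discarding the trivial case $K=\{0\}$ (where $f$ is vacuously open), we fix an integer $m>1$ with $mK=\{0\}$.

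Finally I would use the defining property of $\mathcal{CDA}_k$. From $mK=\{0\}$ it follows that $f(mG)=m\,f(G)=mK=\{0\}$, so $mG\subseteq\ker f$, and since $\ker f$ is closed we get $G_m=\mathrm{cl}_G(mG)\subseteq\ker f$. Consequently $f$ factors as $f=\bar f\circ q_m$, where $q_m\colon G\to G/G_m$ is the quotient homomorphism and $\bar f\colon G/G_m\to K$ is a continuous surjective homomorphism. The map $q_m$ is open, since quotient maps of topological groups are open, and $G/G_m$ is compact because $G\in\mathcal{CDA}_k$. Hence the induced continuous bijective homomorphism $(G/G_m)/\ker\bar f\to K$ from a compact group onto the Hausdorff group $K$ is a homeomorphism, and since the quotient map $G/G_m\to(G/G_m)/\ker\bar f$ is open, $\bar f$ is open; therefore $f=\bar f\circ q_m$ is open, as required.

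The only step carrying genuine content is the boundedness of $K$, which is the standard Baire-category argument once the compactness of $K$ is in hand; the rest is just the assembly of the preceding lemmas together with the open mapping property for quotients of compact groups, so I do not anticipate a real obstacle.
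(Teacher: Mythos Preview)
Your proof is correct and follows essentially the same route as the paper: reduce to $K$ compact via Lemma~\ref{l:H-M-tot-dis}, obtain a bound $mK=0$, use $G_m\subseteq\ker f$ to factor $f$ through the compact group $G/G_m$, and conclude openness from the compact-to-Hausdorff open mapping property. The only cosmetic difference is that the paper cites Theorem~25.9 of Hewitt--Ross for the boundedness of a compact torsion abelian group, whereas you supply the standard Baire-category argument yourself.
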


\begin{proof}
By Lemma \ref{l:H-M-tot-dis} we can suppose that $K$ is a compact abelian group.

Since $K$ is torsion, there is an $m\in\NN$ such that $mK=0$ by Theorem 25.9 of \cite{HR1}.  Then the closed subgroup $G_m$ of $G$ is contained in the kernel, $\ker(f)$, of $f$. So $f$ induces an injective continuous homomorphism ${\tilde f}$ from $G/\ker(f) = \big(G/G_m\big)/\big(\ker(f)/G_m\big)$ onto $K$. As $G/G_m $ is a compact group, we obtain that $G/\ker(f)$ is also compact. Hence ${\tilde f}$ is a topological group isomorphism of the compact group $G/\ker(f)$ onto $K$. Since the projection $\pi: G\to G/\ker(f)$ is an open mapping, we obtain that $f= {\tilde f} \circ \pi$ is  also an open mapping, as required.
\end{proof}

We now recall two algebraic notions.  An abelian group $G$ is called {\em reduced} if $G$ does not have non-trivial divisible subgroups. Clearly, \emph{the group of all integers, $\ZZ$, is reduced}. An abelian group $G$ is called {\em algebraically compact} if $G$ is a direct summand of an abelian group which admits a compact  group topology (see the Corollary in \cite{Balcerzyk}). 

To prove Theorem \ref{t:H-M-main} we need the following lemma which is an immediate corollary of Theorem 6.4 of \cite{LeggWalker}.

\begin{lemma}\label{l:H-M-integers} The group $\ZZ$ is not algebraically compact.
\end{lemma}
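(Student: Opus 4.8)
The plan is to derive a contradiction from the assumption that $\ZZ$ is algebraically compact, by playing off the reducedness of $\ZZ$ (already noted above) against the classical characterization of algebraic compactness via pure subgroups. That characterization — essentially the content of Theorem 6.4 of \cite{LeggWalker}, equivalently the theorem of {\L}o{\'s} and Maranda — says: an abelian group is algebraically compact if and only if it is a direct summand of every abelian group that contains it as a pure subgroup.

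First I would exhibit a concrete compact group containing $\ZZ$ as a pure subgroup, namely the profinite completion $\widehat{\ZZ}=\prod_{p}\ZZ_p$ (the product over all primes of the $p$-adic integers) with $\ZZ$ embedded diagonally. Purity, i.e. $n\ZZ=\ZZ\cap n\widehat{\ZZ}$ for every $n\ge 1$, is immediate on comparing $p$-adic valuations: an integer divisible by $n$ in $\widehat{\ZZ}$ is divisible by $p^{v_p(n)}$ in each $\ZZ_p$, hence divisible by $n$ in $\ZZ$. Since $\widehat{\ZZ}$ carries a compact group topology it is algebraically compact, so if $\ZZ$ were algebraically compact the characterization above would force a splitting $\widehat{\ZZ}=\ZZ\oplus C$.

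The contradiction then comes from two elementary observations about $C\cong\widehat{\ZZ}/\ZZ$. On one hand it is divisible: by the Chinese Remainder Theorem the composite $\ZZ\to\widehat{\ZZ}\to\widehat{\ZZ}/n\widehat{\ZZ}$ is onto for every $n$, so $\widehat{\ZZ}=\ZZ+n\widehat{\ZZ}$ and hence $n(\widehat{\ZZ}/\ZZ)=\widehat{\ZZ}/\ZZ$; and it is non-trivial, since $\widehat{\ZZ}$ is uncountable while $\ZZ$ is countable. On the other hand $\widehat{\ZZ}$ is reduced: each $\ZZ_p$ has trivial divisible subgroup because $\bigcap_k p^k\ZZ_p=\{0\}$, and an arbitrary product of reduced groups is reduced. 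Thus $C$ would be a non-trivial divisible subgroup of the reduced group $\widehat{\ZZ}$, which is impossible; hence $\ZZ$ is not algebraically compact.

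I expect the only genuine obstacle to be the characterization invoked at the outset: the equivalence between ``algebraically compact'' in the sense used here (a summand of a group admitting a compact topology) and the pure-subgroup splitting property is a real theorem, and this is exactly where the cited result of \cite{LeggWalker} does the work; everything after it is routine. (An alternative packaging of the same idea uses the classical fact that a reduced algebraically compact group is complete in its $\ZZ$-adic topology, together with the observation that $\ZZ$ is Hausdorff but not complete in that topology, its completion being $\widehat{\ZZ}\supsetneq\ZZ$.)
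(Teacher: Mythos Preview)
Your argument is correct. The paper, however, gives no proof at all: it simply records the lemma as ``an immediate corollary of Theorem~6.4 of \cite{LeggWalker}'' and moves on. So there is nothing to compare at the level of proof strategy --- you have supplied content where the paper only cites.

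A minor remark on packaging: you invoke Theorem~6.4 of \cite{LeggWalker} as the pure-injectivity characterization (algebraically compact $\Leftrightarrow$ summand in every pure extension), and then run a concrete contradiction through $\widehat{\ZZ}$. This is perfectly fine, but your parenthetical alternative at the end --- that a reduced algebraically compact group is Hausdorff and complete in its $\ZZ$-adic topology, while $\ZZ$ is Hausdorff but not complete --- is likely closer to what the authors had in mind by ``immediate corollary,'' since structure theorems of that shape are what the Legg--Walker paper is about. Either route is sound; yours has the virtue of being explicit about where the real input lies.
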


\medskip
Now we prove our main result.

\begin{theorem} \label{t:H-M-main}
Let $K$ be a pro-Lie group which has an open almost connected subgroup $H$.  For every cardinal $\kappa$, any surjective continuous homorphism $f: \ZZ^\kappa \to K$ is an  open mapping.
\end{theorem}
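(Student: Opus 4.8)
The plan is to reduce to the case that $K$ is a compact abelian group and $\kappa$ is infinite, and then to show that $f$ maps each basic open subgroup of $\ZZ^\kappa$ onto an open subgroup of $K$; the two parts I expect to do real work are a finiteness statement coming from algebraic compactness and a closedness statement coming from a bounded quotient.

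\emph{Reduction.} Since $\ZZ^\kappa\in\mathcal{CDA}$ and $f$ is surjective onto the pro-Lie group $K$, Lemma~\ref{l:H-M-disconnected} gives $K\in\mathcal{CDA}$, so $K$, and hence its open subgroup $H$, is totally disconnected; as $H$ is moreover almost connected, abelian and pro-Lie, Lemma~\ref{l:H-M-tot-dis} makes it compact. It is enough to prove that $f|_{f^{-1}(H)}\colon f^{-1}(H)\to H$ is open, because $f^{-1}(H)$ is open in $\ZZ^\kappa$ and $H$ is open in $K$. If $\kappa$ is finite then $\ZZ^\kappa$ is discrete, so $H$ is a countable compact group; such a group has an isolated point (compact spaces are Baire), hence is discrete and therefore finite, so $\{0\}$ is open in $K$, $K$ is discrete, and $f$ is trivially open. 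If $\kappa$ is infinite, then $f^{-1}(H)$, being an open subgroup of $\ZZ^\kappa$, contains a basic open subgroup $V_F:=\{x\in\ZZ^\kappa:x_i=0\text{ for }i\in F\}$ with $F$ finite; as $f^{-1}(H)/V_F$ is a subgroup of $\ZZ^\kappa/V_F\cong\ZZ^F$, hence free of finite rank $r$, the extension $0\to V_F\to f^{-1}(H)\to\ZZ^r\to0$ splits topologically and $f^{-1}(H)\cong V_F\times\ZZ^r\cong\ZZ^{\kappa\setminus F}\times\ZZ^r\cong\ZZ^\kappa$. After relabelling, it remains to prove: if $f\colon\ZZ^\kappa\to K$ is a surjective continuous homomorphism onto a compact abelian group with $\kappa$ infinite, then $W_F:=f(V_F)$ is a neighbourhood of $0$ for every finite $F\subseteq\kappa$.

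\emph{Finite index.} Fix $F$ and write $W:=W_F$ and $N:=\ker f$. Then $K/W\cong\ZZ^\kappa/(N+V_F)$ is a quotient of $\ZZ^\kappa/V_F\cong\ZZ^F$, hence finitely generated. The crux is that $K/W$ is in fact \emph{finite}: were it infinite it would surject onto $\ZZ$, so, $\ZZ$ being free, $\ZZ$ would be a direct summand of $K$; but $K$ is compact, hence algebraically compact, so $\ZZ$ would be algebraically compact, contradicting Lemma~\ref{l:H-M-integers}. Thus $W$ has finite index $d$ in $K$, and $d\ge2$ once we assume, towards a contradiction, that $W$ is not a neighbourhood of $0$.

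\emph{Closedness.} Let $e\ge2$ be the exponent of the finite group $K/W$, so $eK\subseteq W$, and let $q\colon K\to\bar K:=K/eK$ be the quotient onto the compact abelian group $\bar K$ of exponent dividing $e$. The composite $q\circ f|_{V_F}\colon V_F\to\bar K$ annihilates $eV_F$, hence factors through $V_F/eV_F\cong(\ZZ/e\ZZ)^{\kappa\setminus F}$, which is compact; therefore $q(W)=W/eK$ is a continuous image of a compact group and so is closed in $\bar K$ (equivalently, one may invoke Lemma~\ref{l:H-M-bounded} for the surjection $V_F/eV_F\to\overline{W/eK}$). Since $K$ is compact and $\bar K$ is Hausdorff, $q$ is closed, so $q(\overline W)=\overline{q(W)}=q(W)$; as $q(\overline W)=\overline W/eK$ and $q(W)=W/eK$, this forces $\overline W=W$. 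Hence $W$ is a closed subgroup of finite index $d$ in the compact group $K$, so $W$ is open, contradicting that $W$ is not a neighbourhood of $0$. This would show every $f(V_F)$ is open, i.e.\ $f$ is open. The finite-index step and this closedness step are the substantive points; everything else is bookkeeping.
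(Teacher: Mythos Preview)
Your proof is correct and follows essentially the same route as the paper's: reduce to compact $K$ via Lemmas~\ref{l:H-M-disconnected} and~\ref{l:H-M-tot-dis} and to a domain $\ZZ^\kappa$ by splitting $f^{-1}(H)$, show $f(V_F)$ has finite index in $K$ using that $\ZZ$ is not algebraically compact (Lemma~\ref{l:H-M-integers}), and then pass to the torsion quotient $K/eK$ where the factorization through the compact group $(\ZZ/e\ZZ)^{\kappa\setminus F}$ yields openness. The differences are purely cosmetic---the paper works with $|F|=1$ and invokes Lemma~\ref{l:H-M-bounded} for the last step, while you treat general finite $F$ and unfold that lemma inline; your closure detour can be shortened by noting directly that $W=q^{-1}(q(W))$ is closed since $q(W)$ is compact and $q$ is continuous.
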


\begin{proof}
Without loss of generality we shall assume that the group $H$ is infinite and hence the cardinal $\kappa$ is also  infinite. We split the proof into two steps.

{\em Step 1.} Assume that $K$ is an almost connected pro-Lie group.
By Lemmas \ref{l:H-M-disconnected} and \ref{l:H-M-tot-dis}, we can assume also that $K$ is compact. It is enough to prove that the image $S:=f(U)$ of an open subgroup $U=\{ 0_i\} \times \ZZ^{\kappa\setminus\{ i\}}$ of $\ZZ^{\kappa}$ is open in $K$, for every $i\in\kappa$.

Set $e:=f(1_i)\in K$ and let $\langle e\rangle$ be the cyclic subgroup of $K$ generated by $e$. Note that, by hypothesis, $K=\langle e\rangle + S$. We have to show that $S$ is open.

We claim that there is an $m\in\NN$ such that $me \in S$. Suppose this is not the case, then we obtain that $\langle e\rangle \cap S=\{0\}$, and hence the subgroup $\langle e\rangle \cong \ZZ$ is a direct (algebraic) summand of the compact group $K$. So $\ZZ$ is an algebraically compact group which is false since it contradicts Lemma \ref{l:H-M-integers}.

So let $m\in\NN$ be such that $me \in S$. Then $mK \subset S$. Let $\pi: K\to K/mK$ be the quotient map. Since $K/mK$ is torsion, Lemma \ref{l:H-M-bounded} implies that the map ${\bar f}:= \pi\circ f$ is open. So ${\bar f}(U)$ is open in $K/mK$. Hence the subgroup $f(U)=S=\pi^{-1} \big( {\bar f}(U)\big)$ is open in $K$. Thus $f$ is an open mapping.

{\em Step 2.} Assume that $K$ contains an open almost connected subgroup $H$. Since the subgroup $X:=f^{-1}(H)$ of $\ZZ^\kappa$ is open, we can find a finite subset $F=\{ i_1,\dots, i_n\}$ of $\kappa$ such that $X$ contains the open subgroup $Y:= \ZZ^{\kappa\setminus F}$. Since $X/Y$ is a subgroup of $\ZZ^n =\ZZ^\kappa/Y$,  there is a $k\in\NN$ such that $X/Y =\ZZ^k$ by \cite[A 26]{HR1}.

As the projection $\pi_Y$ of $X$ onto $Y$ is continuous and $\pi_Y(y)=y$, for every $y\in Y$, we obtain that $X=X/Y \times Y$, see \cite[6.22]{HR1}. So $X$ is topologically isomorphic to $\ZZ^k\times Y$. Hence the restriction map $p:= f|_{X}$ from $X$ onto $H$ is open by Step 1.  As $H$ is open, we obtain that  $f$ is also an open mapping, as required.
\end{proof}

The Principal Structure Theorem for Locally Compact Abelian Groups  (Theorem 25 of \cite{Morris}) says that every locally compact abelian group $K$ has an open subgroup $H$ which is topologically isomorphic to $\RR^n\times C$, where $C$ is a compact abelian group and $n$ is a non-negative integer. So $H$ is an almost connected pro-Lie grup. So as an immediate consequence of Theorem \ref{t:H-M-main} we obtain Corollary \ref{main-corollary}, which provides a positive answer to Question 5 of \cite{HM}.

\begin{corollary}\label{main-corollary} 
Let $K$ be a locally compact abelian group.  For every cardinal $\kappa$, any surjective continuous homorphism $f: \ZZ^\kappa \to K$ is an  open mapping. In particular, this is the case if $K$ is compact.
\end{corollary}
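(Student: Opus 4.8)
The final statement is Corollary \ref{main-corollary}, which says every surjective continuous homomorphism $f\colon \ZZ^\kappa\to K$ onto a locally compact abelian group $K$ is open.

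The plan is to deduce this immediately from Theorem \ref{t:H-M-main} together with the Principal Structure Theorem for locally compact abelian groups. First I would invoke the Principal Structure Theorem (Theorem 25 of \cite{Morris}): since $K$ is a locally compact abelian group, it has an open subgroup $H$ topologically isomorphic to $\RR^n\times C$ with $C$ a compact abelian group and $n\ge 0$ an integer. The next step is to observe that such an $H$ is an almost connected pro-Lie group: the factor $\RR^n$ is connected, $C$ is compact hence almost connected, a finite product of almost connected pro-Lie groups is again an almost connected pro-Lie group (indeed $\RR^n$ and $C$ are pro-Lie, products of finitely many pro-Lie groups are pro-Lie, and $H/H_0$ embeds in $C/C_0$ which is compact). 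Hence $K$ satisfies the hypothesis of Theorem \ref{t:H-M-main} — it is a pro-Lie group (being locally compact abelian with an open pro-Lie subgroup, or directly since locally compact abelian groups are pro-Lie) with an open almost connected subgroup $H$. Applying Theorem \ref{t:H-M-main} gives that $f$ is open.

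There is essentially no obstacle here: the corollary is a direct application of the theorem once one recognizes that the open subgroup supplied by the structure theorem is of the required almost connected pro-Lie type. The only mild point to check is that $\RR^n\times C$ is almost connected, which is immediate because its identity component contains $\RR^n\times C_0$ and the quotient is isomorphic to $C/C_0$, a compact group. The ``in particular'' clause, for $K$ compact, is the special case $n=0$, $H=K=C$, and recovers the original question of Hofmann and the second author.

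\begin{proof}
By the Principal Structure Theorem for Locally Compact Abelian Groups (Theorem 25 of \cite{Morris}), the locally compact abelian group $K$ has an open subgroup $H$ which is topologically isomorphic to $\RR^n\times C$, where $C$ is a compact abelian group and $n$ is a non-negative integer. The group $\RR^n$ is a connected Lie group and $C$ is a compact, hence almost connected, pro-Lie group; therefore $H\cong\RR^n\times C$ is a pro-Lie group, and its identity component $H_0$ contains $\RR^n\times C_0$, so that $H/H_0$ is a quotient of the compact group $C/C_0$ and hence compact. Thus $H$ is an almost connected pro-Lie group. Since $K$ has $H$ as an open subgroup, $K$ is a pro-Lie group which has an open almost connected subgroup. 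By Theorem \ref{t:H-M-main}, any surjective continuous homomorphism $f\colon \ZZ^\kappa\to K$ is an open mapping, for every cardinal $\kappa$. The final assertion is the special case $n=0$ and $H=K=C$.
\end{proof}
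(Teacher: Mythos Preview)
Your proof is correct and follows essentially the same route as the paper: invoke the Principal Structure Theorem to obtain an open subgroup $H\cong\RR^n\times C$, note that $H$ is an almost connected pro-Lie group, and apply Theorem~\ref{t:H-M-main}. You merely spell out in a little more detail why $H$ is almost connected and why $K$ is pro-Lie, points the paper leaves implicit.
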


Indeed, since a pro-Lie group $K$ with the property that $K/K_0$ is locally compact has an open  subgroup which is an almost connected pro-Lie group by Corollary 8.12  of \cite{HM-Structure}, we obtain the stronger result:

\begin{corollary} 
Let $K$ be  an abelian pro-Lie group $K$ with the property that $K/K_0$ is locally compact.  For every cardinal $\kappa$, any surjective continuous homorphism $f: \ZZ^\kappa \to K$ is an  open mapping.
\end{corollary}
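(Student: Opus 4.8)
**The plan is to reduce this corollary to Theorem~\ref{t:H-M-main} by invoking the structural fact about abelian pro-Lie groups already cited in the paper.** The statement to prove is: if $K$ is an abelian pro-Lie group with $K/K_0$ locally compact, then every surjective continuous homomorphism $f:\ZZ^\kappa\to K$ is open. Since Theorem~\ref{t:H-M-main} handles any pro-Lie group possessing an open almost connected subgroup, it suffices to exhibit such a subgroup inside $K$. That is precisely the content of Corollary~8.12 of \cite{HM-Structure}, which I would cite directly.

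First I would record the hypothesis: $K$ is an abelian pro-Lie group and the quotient $K/K_0$ (which is a totally disconnected abelian pro-Lie group, hence a projective limit of discrete groups) is locally compact. The key input is that Corollary~8.12 of \cite{HM-Structure} asserts that such a $K$ has an open subgroup $H$ which is an almost connected pro-Lie group. (Intuitively: local compactness of $K/K_0$ gives a compact open subgroup $Q$ of $K/K_0$; its preimage $H$ under the quotient map $K\to K/K_0$ is open in $K$, contains $K_0$, and satisfies $H/K_0\cong Q$ compact, so $H$ is almost connected; and $H$, being a closed subgroup of a pro-Lie group, is itself a pro-Lie group.) With $H$ in hand, all the hypotheses of Theorem~\ref{t:H-M-main} are met.

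Then I would simply apply Theorem~\ref{t:H-M-main} to $f:\ZZ^\kappa\to K$ with this open almost connected subgroup $H\subseteq K$, and conclude that $f$ is open. This is exactly the one-line argument indicated by the sentence ``we obtain the stronger result'' preceding the statement.

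The only genuine obstacle is verifying that Corollary~8.12 of \cite{HM-Structure} says what is needed — i.e.\ that ``$K/K_0$ locally compact'' is precisely the hypothesis under which one gets an open almost connected subgroup. Since the paper explicitly appeals to this result in the remark before the corollary, I expect no difficulty: the proof is a direct citation followed by an application of Theorem~\ref{t:H-M-main}. Concretely:

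\begin{proof}
Since $K$ is an abelian pro-Lie group for which $K/K_0$ is locally compact, by Corollary~8.12 of \cite{HM-Structure} the group $K$ has an open subgroup $H$ which is an almost connected pro-Lie group. Now Theorem~\ref{t:H-M-main} applies and yields that $f$ is an open mapping.
\end{proof}
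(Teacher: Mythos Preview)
Your proposal is correct and follows exactly the paper's own argument: invoke Corollary~8.12 of \cite{HM-Structure} to obtain an open almost connected subgroup of $K$, and then apply Theorem~\ref{t:H-M-main}. The paper gives this reasoning in the sentence immediately preceding the corollary rather than as a separate proof.
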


We conclude with an open question.

\begin{question}
Is every surjective continuous homomorphism from $\ZZ^\kappa$ onto a pro-Lie group $K$ open? 
\end{question}

\end{document}